\documentclass{amsart}
\usepackage[left=2.5cm,    
right=2.5cm,   
top=2cm,       
bottom=2cm]{geometry} 
\usepackage{amssymb}
\usepackage{amsmath}
\usepackage{mathtools}
\usepackage{booktabs}
\usepackage{float}
\usepackage{verbatim}
\usepackage{tikz-cd}
\usepackage{bm} 
\usepackage[colorlinks,linkcolor=cyan,citecolor=cyan]{hyperref}
\usepackage[hang,flushmargin]{footmisc} 
\usepackage[square,comma,sort&compress,numbers]{natbib} 
\usepackage{mathrsfs} 
\usepackage[font=footnotesize,skip=0pt,textfont=rm,labelfont=rm]{caption,subcaption} 

\usepackage{amsthm}
\newtheorem{theorem}{Theorem}[section]
\newtheorem{proposition}{Proposition}[section]
\newtheorem{lemma}{Lemma}[section]
\newtheorem{definition}{Definition}[section]

\newtheorem{remark}{Remark}[section]

\begin{document}
	\title{Nonclosure of stationary Yang-Mills connections under weak convergence}
	\author[X. Han]{Xiaoli Han}
	\address{Department of Mathematical Sciences, Tsinghua University, Beijing 100084, China}
	\email{hanxiaoli@mail.tsinghua.edu.cn}
	
	\author[J. Li]{Jiayu Li}
	\address{The Department of Mathematics, Nanjing University, Nanjing 210093, China}
	\email{jiayuli@nju.edu.cn}
	
	\author[Y. Wen]{Yang Wen$^{*}$}
	\address{School of Mathematics and Statistics, Nanjing University of Science and Technology, Nanjing 210094, China}
	\email{wenyang2325@njust.edu.cn}
	\thanks{$^{*}$Corresponding author.}
	\begin{abstract}
		We construct a sequence of stationary weak Yang-Mills connections on the trivial vector bundle $B^5\times\mathbb{R}^{15}$ with structure group $\operatorname{SO}(15)$ whose weak limit is a nonstationary weak Yang-Mills connection. Motivated by the stationarity question arising in Tian's work on Yang-Mills compactness, our result establishes the failure of weak closure for stationary weak Yang-Mills connections with an isolated singularity. It also provides a Yang-Mills analogue of the phenomenon exhibited by Ding, Li and Li for harmonic maps.
	\end{abstract}
	\subjclass[2020]{Primary 58E15; Secondary 53C07}
	\keywords{Yang-Mills connections, stationarity, weak convergence, isolated singularities}
	\date{}
	\maketitle
	\section{introduction}\ 
	
	Let $(M^n,g)$ be a Riemannian manifold and let $E$ be a Riemannian vector bundle of rank $r$ over $M$ with compact structure group $G\subset SO(r)$. Let $\mathfrak{g}_E$ be the associated adjoint bundle. For a $G$-compatible connection $A$ with curvature $F_A\in L^2(M)$, the Yang-Mills functional is defined by
	\begin{align*}
		\operatorname{YM}(A)=\frac12\int_M|F_A|^2dV_g.
	\end{align*}
	Let $d^A$ be the exterior covariant derivative induced by $A$ and let $\delta^A$ be its formal adjoint. Critical points of the Yang-Mills functional with respect to compactly supported variations of the connection are called Yang-Mills connections and satisfy
	\begin{align}\label{YM equation}
		\delta^AF_A=0.
	\end{align}
	We are interested in weak solutions of \eqref{YM equation} with $W^{1,2}(M)\cap L^4(M)$ regularity, understood relative to a fixed smooth reference connection. A weak Yang-Mills connection $A$ is called stationary if
	\begin{align*}
		\frac{d}{dt}|_{t=0}\operatorname{YM}(\Phi_t^*A)=0
	\end{align*}
	for every flow $\{\Phi_t\}$ generated by a smooth compactly supported vector field on $M$. Here the energy of the pullback connection is computed using the fixed metric $g$ on $M$ and the pullback bundle metric on $\Phi_t^*E$. An equivalent formulation is given in Definition \ref{def:stationary}. Every smooth Yang-Mills connection is stationary, whereas for weak solutions stationarity is an additional condition.
	
	Weak convergence of Yang-Mills connections can be accompanied by concentration of curvature energy. For a sequence of smooth Yang-Mills connections with uniformly bounded energy, let $A$ be a limit obtained after passing to a subsequence and applying suitable gauge transformations. The limiting energy measure has the form
	\begin{align*}
		|F_{A_k}|^2dV_g\rightharpoonup |F_A|^2dV_g+\nu.
	\end{align*}
	Tian \cite{Tian} proved the rectifiability of the defect measure: in dimensions $n>4$, it can be written as $\nu=\theta\mathcal{H}^{n-4}\lfloor S$ for an $(n-4)$-rectifiable set $S$. Rivi\`ere \cite{Riviere} established the energy identity in dimension four and obtained a higher-dimensional result under an additional uniform $L^1$ bound on the covariant Hessian of the curvature. Naber and Valtorta \cite{NV} subsequently proved the higher-dimensional energy identity without that additional assumption. In particular, for $\mathcal{H}^{n-4}$-almost every $x\in S$, there is a finite collection of Yang-Mills bubbles $B_{x,1},\dots,B_{x,N_x}$ on $S^4$ such that
	\begin{align*}
		\theta(x)=\sum_{l=1}^{N_x}\int_{S^4}|F_{B_{x,l}}|^2d\Theta.
	\end{align*}
	The energy identity describes the energy carried by the bubbles. Stationarity imposes a balance law for the full quadratic stress tensor, and the energy identity alone does not imply that this balance law passes to the weak limit. This leads to the question of whether stationarity is preserved under weak convergence.
	
	This question is closely related to the geometry of the defect measure. In his discussion of blow-up loci, Tian \cite[Section 5]{Tian2} related the first variation of the defect measure to the stationarity identity for the limiting connection and expressed doubt that the limit is stationary in general. His discussion concerns limits of smooth Yang-Mills connections. It also motivates the corresponding weak closure question for stationary weak Yang-Mills connections.
	
	An analogous phenomenon is known for harmonic maps. Ding, Li and Li \cite{DLL} constructed a sequence of stationary harmonic maps in dimension three whose weak limit is harmonic but not stationary. Their construction uses radial extensions of harmonic maps from $S^2$ to $S^2$, a condition on the first moment of the spherical energy density, and bubbling that destroys this condition in the limit. More recently, Naber and Valtorta \cite{NV2} established the energy identity for stationary harmonic maps, expressing the density of the defect measure as a sum of energies of harmonic spheres. These results illustrate the distinction between accounting for concentrated energy and preserving stationarity.
	
	In this paper, we construct an explicit counterexample to weak closure for stationary weak Yang-Mills connections in dimension five, the first supercritical dimension for the Yang-Mills functional. Our main result is the following.
	\begin{theorem}\label{thm:main}
		There exist a uniformly bounded sequence of weak Yang-Mills connections $A_k$ in $W^{1,2}(B^5)\cap L^4(B^5)$ on the trivial vector bundle $B^5\times\mathbb{R}^{15}$ with structure group $SO(15)$ and a weak Yang-Mills connection $A_\infty\in W^{1,2}(B^5)\cap L^4(B^5)$ such that
		\begin{align*}
			A_k&\to A_\infty\quad\text{in }L^2(B^5),\\
			A_k&\rightharpoonup A_\infty\quad\text{in }W^{1,2}(B^5),\\
			F_{A_k}&\rightharpoonup F_{A_\infty}\quad\text{in }L^2(B^5).
		\end{align*}
		Each $A_k$ is stationary, but $A_\infty$ is not stationary. Moreover, $A_k$ and $A_\infty$ are smooth on $B^5\backslash\{0\}$. The Sobolev spaces and convergence are understood in the standard trivialization.
	\end{theorem}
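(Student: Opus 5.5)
We follow the strategy of Ding, Li and Li \cite{DLL}, adapted to connections. The basic objects are radial extensions: given a smooth Yang-Mills connection $a$ on a bundle over $S^4$, pull it back by $x\mapsto x/|x|$ to get a connection $A=\pi^*a$ on $B^5\setminus\{0\}$. Then $F_A=\pi^*F_a$ has no $dr$ component and $|F_A|^2(x)=|x|^{-4}|F_a|^2(x/|x|)$, so $F_A\in L^2(B^5)$. A direct computation shows $\delta^AF_A=0$ away from the origin, and since a point has zero capacity in dimension five, $A$ is a weak Yang-Mills connection on $B^5$.

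To realize everything on one trivial bundle, we embed the relevant $S^4$ bundles (for instance the $SU(2)$ instanton bundles and their conjugates) into the trivial bundle $S^4\times\mathbb{R}^{15}$. The natural tool is the isometric embedding $S^4\to\mathbb{R}^{15}$ through symmetric traceless matrices, or a direct-sum decomposition of the trivial bundle. Each connection is then the induced connection $PdP$ of the projection plus a Yang-Mills correction on the orthogonal complement. Regularity in $W^{1,2}\cap L^4$ in the trivialization follows from $|A|\lesssim|x|^{-1}$ and $|\nabla A|\lesssim|x|^{-2}$.

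Next we test stationarity. For $A=\pi^*a$, we insert a compactly supported vector field $X$ into the stress tensor identity $\int(|F|^2\operatorname{div}X-4\langle F(\nabla X\cdot,\cdot),F\rangle)=0$. Because of radial homogeneity, the contribution from a small ball around $0$ reduces to a boundary term at the origin proportional to $X(0)\cdot\int_{S^4}\omega\,|F_a|^2(\omega)\,d\omega$. The computation mirrors the harmonic map case, with the weight $r^{-4}$ in five dimensions giving exactly a boundary term. So $\pi^*a$ is stationary if and only if the first moment $\int_{S^4}\omega|F_a|^2d\omega=0$. We will state and prove this criterion as a lemma before constructing the sequence.

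The sequence is built as follows. Take $a_k$ to be Yang-Mills connections on $S^4$, all in the same trivial $SO(15)$ bundle, with two features. First, each $a_k$ has vanishing first moment, $\int\omega|F_{a_k}|^2=0$. Second, as $k\to\infty$, a piece of energy bubbles off at a point $p\in S^4$. The concrete choice we try first is a direct sum: a standard instanton $I$ together with an anti-instanton $J_{\lambda_k}$ that is conformally concentrated at $p$, with the concentration centre and the scale $\lambda_k$ tuned so that the total first moment cancels. The weak limit $a_\infty=I\oplus\lim J_{\lambda_k}$ then loses the bubble energy at $p$, so its first moment equals $-c\,p\neq0$ and $A_\infty$ is not stationary. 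The convergences $A_k\to A_\infty$ in $L^2$, weakly in $W^{1,2}$, and $F_{A_k}\rightharpoonup F_{A_\infty}$ follow from smooth convergence of $a_k$ away from $p$ combined with uniform energy bounds; the lost energy sits on the ray through $p$, a set of measure zero. Uniform boundedness in $W^{1,2}\cap L^4$ requires choosing gauges in which the connection forms of the concentrating summands are uniformly bounded in $L^4(S^4)$ and $W^{1,2}$, for example the pullback of a fixed bundle's projection connection under conformal dilations.

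We expect the main obstacle to be this uniform gauge control while keeping everything inside a single trivialized $\mathbb{R}^{15}$ bundle. In particular, a dilated instanton's connection form in a fixed trivialization blows up in $L^4$ unless the gauge is chosen carefully, and the rank $15$ has to be large enough to make this possible. The balancing of the first moments is a continuity argument in the concentration parameters and should be routine by comparison.
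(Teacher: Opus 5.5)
Your framework matches the paper's: the radial pullback $\pi^*b$, the criterion that it is stationary iff $\int_{S^4}x|F_b|^2d\Theta=0$, and a direct sum on a trivial bundle in which energy bubbles off. The concrete sequence you propose, however, cannot have vanishing first moments, and the balancing step you call routine is exactly where it fails.

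Write $m_c=\int_{S^4}x|F_c|^2d\Theta$ and $E_c=\int_{S^4}|F_c|^2d\Theta$. For a smooth connection with $F_c\neq0$ one has $|m_c|<E_c$ strictly, because $|F_c|^2d\Theta$ is not a point mass. If $J_k$ concentrates completely, then $|m_{J_k}|\to E_J$. Moving the centre of $J_k$ only rotates $m_{J_k}$; its length is fixed by the scale (for the paper's family it is $C_0\alpha(\lambda)$, with $\alpha$ strictly decreasing).

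Now take a fixed summand $I$. Requiring $m_I+m_{J_k}=0$ for all $k$ forces $|m_{J_k}|=|m_I|$ to be constant, hence equal to its limit $E_J$. This contradicts $|m_{J_k}|<E_J$, so no choice of centres together with $\lambda_k\to0$ works. Letting $I=I_k$ converge smoothly does not help either: you would need $|m_{I_\infty}|=E_J=E_{I_\infty}$ for a charge-one instanton/anti-instanton pair, which is impossible. And if $I$ is the round instanton, $|F_I|^2$ is constant and $m_I=0$, so no $A_k$ would even be stationary.

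The missing idea is to use at least two concentrating summands, so that the total bubble moment has adjustable length. The paper takes $b_k=b'_{1/2}\oplus b^{p_k^+}_{\lambda_k}\oplus b^{p_k^-}_{\lambda_k}$ on $S^4\times(\mathbb{R}^5\oplus\mathbb{R}^5\oplus\mathbb{R}^5)$. Each summand is a conformal pullback of one fixed connection on the trivial $\mathbb{R}^5$ bundle. The centres are $p_k^\pm=\pm\sqrt{1-\alpha_0^2/(4\alpha(\lambda_k)^2)}\,e_1+\frac{\alpha_0}{2\alpha(\lambda_k)}x_0$. Then $C_0\alpha(\lambda_k)(p_k^++p_k^-)=C_0\alpha_0x_0$ cancels the moment $-C_0\alpha_0x_0$ of the fixed summand for every $k$: the opening angle between $p_k^+$ and $p_k^-$ absorbs the growth of $\alpha(\lambda_k)$. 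The limit $b'_{1/2}\oplus0\oplus0$ keeps the nonzero moment $-C_0\alpha_0x_0$. This is where the rank $15=3\cdot5$ comes from, not from gauge requirements.

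The gauge issue you single out as the main obstacle is comparatively mild. Each summand is a globally defined $\mathfrak{so}(5)$-valued form on $S^4$ (the paper uses $a=n\,dn^T-dn\,n^T$), and the $L^4$ norm of $1$-forms on a $4$-manifold is conformally invariant. Gauging so that $\hat a(x_0)=0$, at the point toward which $f_\lambda$ pushes the sphere, then yields $\|b'_\lambda\|_{L^2}\le C\lambda$ and $\sup_\lambda\|\nabla b'_\lambda\|_{L^2}<\infty$.
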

	
	Thus stationarity is not preserved under weak $W^{1,2}$ convergence in this class, even when the curvatures converge weakly in $L^2$ to the curvature of the limit. The corresponding question for sequences that are smooth on the whole ball is not settled by this construction.
	
	Our construction follows the radial extension strategy of Ding, Li and Li \cite{DLL}. Let $\pi:B^5\backslash\{0\}\to S^4$ be given by $\pi(y)=\frac{y}{|y|}$. We show that the radial pullback of a smooth Yang-Mills connection $b$ on $S^4$ is a weak Yang-Mills connection on $B^5$, and that
	\begin{align*}
		\pi^*b\text{ is stationary on }B^5\quad\Longleftrightarrow\quad\int_{S^4}x|F_b|^2d\Theta=0.
	\end{align*}
	Starting from an explicit Yang-Mills connection on $S^4\times\mathbb{R}^5$, we use conformal transformations and rotations to produce concentrating families. We then take the direct sum of a fixed connection with two concentrating connections whose energy moments cancel the moment of the fixed connection. The concentrating terms converge weakly to zero, leaving a limit with a nonzero energy moment. A suitable gauge choice allows us to verify the required Sobolev bounds in a fixed trivialization.
	\section{Preliminaries}\ 
	
	We fix our notation and recall the identities for connections and curvature that will be used below. Let $(M,g)$ be an $n$-dimensional Riemannian manifold, and let $D$ denote its Levi-Civita connection. Let $E\to M$ be a Riemannian vector bundle of rank $r$ with structure group $G\subset\operatorname{SO}(r)$, where $G$ is	 compact. We write $\mathfrak{g}$ for the Lie algebra of $G$ and $\mathfrak{g}_E\subset\mathfrak{so}(E)$ for the associated adjoint bundle. We denote the space of smooth $\mathfrak{g}_E$-valued $p$-forms by $\Omega^p(\mathfrak{g}_E)$. The bundle metric on $E$ induces the pointwise inner product
	\begin{align*}
		\langle\phi,\psi\rangle={\rm Tr\,}(\phi^T\psi)
	\end{align*}
	on $\Omega^0(\mathfrak{g}_E)$. Its $\operatorname{Ad}(G)$-invariance implies that
	\begin{align*}
		\langle[\phi,\psi],\rho\rangle=\langle\phi,[\psi,\rho]\rangle
	\end{align*}
	for any $\phi,\psi,\rho\in\Omega^0(\mathfrak{g}_E)$. The induced pointwise inner product on $\Omega^p(\mathfrak{g}_E)$ is given by
	\begin{align*}
		\langle\phi,\psi\rangle=\frac{1}{p!}\sum_{1\leq i_1,\ldots,i_p\leq n}\langle\phi(e_{i_1},\ldots,e_{i_p}),\psi(e_{i_1},\ldots,e_{i_p})\rangle,
	\end{align*}
	where $\{e_i\}_{i=1}^n$ is a local orthonormal frame of $TM$.
	
	Let $\nabla$ be a connection on $E$ compatible with the $G$-structure, and use the same symbol for the induced connections on $\mathfrak{g}_E$ and $\Lambda^pT^*M\otimes\mathfrak{g}_E$. Thus, for $\phi\in\Omega^p(\mathfrak{g}_E)$,
	\begin{align*}
		(\nabla_X\phi)(X_1,\ldots,X_p)=\nabla_X(\phi(X_1,\ldots,X_p))-\sum_{k=1}^p\phi(X_1,\ldots,D_XX_k,\ldots,X_p).
	\end{align*}
	The $W^{1,2}$-norm of 1-forms is given by
	\begin{align*}
		\|\phi\|_{W^{1,2}}^2=\|\phi\|_{L^2}^2+\|\nabla\phi\|_{L^2}^2
	\end{align*}
	for any $\phi\in\Omega^1(\mathfrak{g}_E)$, where
	\begin{align*}
		\|\nabla\phi\|_{L^2}^2=\int_M\sum_{i,j=1}^n|\nabla_{e_i}\phi(e_j)|^2dV.
	\end{align*}
	This connection induces the exterior covariant derivative
	\begin{align*}
		d^\triangledown:\Omega^p(\mathfrak{g}_E)\to\Omega^{p+1}(\mathfrak{g}_E)
	\end{align*}
	and its formal adjoint
	\begin{align*}
		\delta^\triangledown:\Omega^p(\mathfrak{g}_E)\to\Omega^{p-1}(\mathfrak{g}_E),
	\end{align*}
	which are given by
	\begin{align*}
		d^\triangledown\phi(X_1,\ldots,X_{p+1})&=\sum_{i=1}^{p+1}(-1)^{i+1}\nabla_{X_i}\phi(X_1,\ldots,\widehat{X_i},\ldots,X_{p+1}),\\
		\delta^\triangledown\phi(X_1,\ldots,X_{p-1})&=-\sum_{i=1}^n\nabla_{e_i}\phi(e_i,X_1,\ldots,X_{p-1}).
	\end{align*}
	
	In a local orthonormal trivialization of $E$, the connection has the form
	\begin{align*}
		\nabla=d+A.
	\end{align*}
	We write $d^\triangledown=d^A$, $\delta^\triangledown=\delta^A$ for the trivialization. The curvature tensor $F_A$ is
	\begin{align*}
		F_A=dA+A\wedge A,
	\end{align*}
	where
	\begin{align*}
		\phi\wedge\psi(X,Y)=\frac12([\phi(X),\psi(Y)]-[\phi(Y),\psi(X)])
	\end{align*}
	for any $\phi,\psi\in\Omega^1(\mathfrak{g}_E)$ and $X,Y\in\Gamma(TM)$. We use the following definition of stationary Yang-Mills connections; see \cite[(4.5.12) and Remark 11]{Tian}.
	\begin{definition}\label{def:stationary}
		A connection $A$ with $F_A\in L^2_{\operatorname{loc}}(M)$ on a Riemannian manifold $(M^n,g)$ is called stationary if the following conditions hold:
		\begin{enumerate}
			\item[(i)] $A$ is a weak solution of the Yang-Mills equation \eqref{YM equation}, that is,
			\begin{align*}
				\int_M\langle F_A,d^A\varphi\rangle dV_g=0
			\end{align*}
			for every $\varphi\in\Omega^1(\mathfrak{g}_E)$ with compact support.
			\item[(ii)] $A$ satisfies
			\begin{align}
				\int_M\left(|F_A|^2\operatorname{div}_gX-2\sum_{i,j,k=1}^n\langle F_A(e_i,e_k),F_A(e_j,e_k)\rangle g(D_{e_i}X,e_j)\right)dV_g=0
			\end{align}
			for every smooth compactly supported vector field $X$ on $M$, or equivalently,
			\begin{align}
				\operatorname{div}_gT=0
			\end{align}
			in the sense of distributions, where
			\begin{align*}
				T(e_i,e_j)=|F_A|^2\delta_{ij}-2\sum_{k=1}^n\langle F_A(e_i,e_k),F_A(e_j,e_k)\rangle
			\end{align*}
			and $\{e_i\}_{i=1}^n$ is a local orthonormal frame of $TM$.
		\end{enumerate}
	\end{definition}
	\begin{remark}
		With our normalization of the 2-form norm, the stationarity identity above is one half of \cite[(4.5.12)]{Tian}; the resulting notion of stationarity is unchanged.
	\end{remark}
	\section{Radial pullbacks of Yang-Mills connections}
	
	In this section, we study radial pullbacks of smooth Yang-Mills connections on $S^4$ and give a criterion for their stationarity on $B^5$.
	
	Let $\iota:(S^4,g_{S^4})\to(\mathbb{R}^5,g)$ be the standard isometric embedding and $n(x)=x\in NS^4$ be the unit outer normal vector field. Let $d$ and $D$ be the Levi-Civita connections of $T\mathbb{R}^5$ and $TS^4$, respectively. The Gauss formula gives
	\begin{equation}
		\begin{split}
			&d_XY=D_XY-g_{S^4}(X,Y)n,\\
			&d_Xn=X
		\end{split}
	\end{equation}
	for any $X,Y\in\mathscr{X}(S^4)$. Let $E=S^4\times\mathbb{R}^5$ be a trivial bundle over $S^4$ equipped with the structure group $SO(5)$. Then $\Omega^p(\mathfrak{g}_E)$ is identified with $\Omega^p(S^4)\otimes\mathfrak{so}(5)$. We identify a metric connection $\nabla^b=d+b$ on $E$ with its connection form $b\in\Omega^1(\mathfrak{g}_E)$.
	\begin{lemma}\label{lem:F_B(r,theta)=F_b(theta)}
		Let $\pi:B^5\backslash\{0\}\to S^4$ with $\pi(y)=\frac{y}{|y|}$. Assume that $b$ is a smooth Yang-Mills connection over $S^4$. Then $B:=\pi^*b$ is a smooth Yang-Mills connection over $B^5\backslash\{0\}$ satisfying
		\begin{equation}
			\begin{split}
				&F_B(\iota_{r*}X_i,\iota_{r*}X_j)|_{rx}=F_b(X_i,X_j)|_x,\\
				&i_{\frac\partial{\partial r}}F_B=0
			\end{split}
		\end{equation}
		for any $r\in(0,1)$ and $x\in S^4$, where $\iota_r:S^4\to B^5$ is defined by $\iota_r(x)=rx$, and $\{X_i\mid1\le i\le4\}$ and $\{\frac\partial{\partial r},\iota_{r*}X_i\mid1\le i\le4\}$ are bases of $T_xS^4$ and $T_{rx}(B^5\backslash\{0\})$, respectively.
	\end{lemma}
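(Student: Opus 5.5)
We write $\pi=\iota_1^{-1}\circ\rho$, where $\rho(y)=y/|y|$, and use naturality of curvature under pullback: $F_{\pi^*b}=\pi^*F_b$. In the trivialization $E\times$ pulled back, $B=\pi^*b$ is a smooth $\mathfrak{so}(5)$-valued 1-form on $B^5\backslash\{0\}$, and $dB+B\wedge B=\pi^*(db+b\wedge b)$ because $d$ commutes with pullback and $\wedge$ is pointwise. Since $\pi\circ\iota_r=\mathrm{id}_{S^4}$, we have $\pi_*\iota_{r*}X_i=X_i$, which gives the first identity. Since $\pi_*\frac{\partial}{\partial r}=0$ (as $\pi$ is constant along rays), we get $i_{\partial_r}F_B=0$. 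Smoothness is clear.

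The substantive part is $\delta^BF_B=0$ on $B^5\backslash\{0\}$. We compute at $rx$ in the orthonormal frame $\{e_0=\partial_r,\,e_i=\frac1r\iota_{r*}X_i\}$, where $X_i$ is a local orthonormal frame on $S^4$. Then
\[
(\delta^BF_B)(Y)=-\sum_{a=0}^4(\nabla_{e_a}F_B)(e_a,Y).
\]
By the first identity, $F_B(e_i,e_j)|_{rx}=r^{-2}F_b(X_i,X_j)|_x$, and $F_B(e_0,\cdot)=0$.

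For the $a=0$ term we use $d_{\partial_r}\partial_r=0$ and $d_{\partial_r}e_i=0$ (the frame $e_i$ is parallel along rays in flat space, since $\frac1r\iota_{r*}X_i$ is the constant vector $X_i$ translated). Hence $(\nabla_{e_0}F_B)(e_0,Y)=\nabla_{e_0}(F_B(e_0,Y))-F_B(d_{e_0}e_0,Y)-F_B(e_0,d_{e_0}Y)=0$, since $F_B$ kills $e_0$. Here $B(\partial_r)=0$ is also used.

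For $a=i\ge1$ we use the Gauss formula scaled to the sphere of radius $r$: $d_{e_i}e_j=\frac1r\,\iota_{r*}(D_{X_i}X_j)/r-\frac1r\delta_{ij}e_0$. The normal component pairs with $F_B(e_0,\cdot)=0$ and drops out. The tangential part reproduces the $S^4$ covariant derivative, so for $Y=e_j$ we get
\[
\sum_i(\nabla_{e_i}F_B)(e_i,e_j)=r^{-3}\sum_i(\nabla^b_{X_i}F_b)(X_i,X_j)=-r^{-3}(\delta^bF_b)(X_j)=0.
\]
The only subtle point is that the term $F_B(e_i,d_{e_i}e_j)$ contains a normal piece only when $i=j$, and $F_B(e_i,e_0)=0$ anyway.

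For $Y=e_0$ we have
\[
\sum_i(\nabla_{e_i}F_B)(e_i,e_0)=\sum_i\big[e_i(F_B(e_i,e_0))+B\text{-terms}-F_B(d_{e_i}e_i,e_0)-F_B(e_i,d_{e_i}e_0)\big]=-\sum_iF_B(e_i,\tfrac1r e_i)=0
\]
by skew-symmetry. Here $d_{e_i}e_0=\frac1re_i$ comes from $d_Xn=X$.

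The main obstacle is bookkeeping the scaling factors and the second fundamental form terms. We handle this cleanly by observing that the restriction of $B$ to each sphere $rS^4$ is the pullback of $b$ under a homothety. The 4-dimensional Yang-Mills equation is conformally invariant, which gives the tangential part; the normal terms vanish by $i_{\partial_r}F_B=0$ and skew-symmetry.
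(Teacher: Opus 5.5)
Your proposal is correct and follows essentially the same route as the paper. The curvature identities come from the pullback, using $\pi_*\iota_{r*}X_i=X_i$ and $\pi_*\partial_r=0$. The equation $\delta^BF_B=0$ is then checked componentwise in an adapted frame with the warped-product (Gauss formula) connection coefficients: the tangential components reduce to a multiple of $\delta^bF_b$, and the radial component vanishes by $i_{\partial_r}F_B=0$ and skew-symmetry. The only cosmetic difference is that you use the normalized frame $e_i=\frac1r\iota_{r*}X_i$, which is parallel along rays, instead of the paper's $Y_i=\iota_{r*}X_i$ in normal coordinates; this is why your factor is $r^{-3}$ rather than $r^{-2}$.
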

	\begin{proof}
		Choose normal coordinates $(x^1,\dots,x^4)$ near $x_0\in S^4$, and set $X_i=\frac\partial{\partial x^i}$, $T=\frac\partial{\partial r}$, and $Y_i|_{rx}=\iota_{r*}X_i|_x$. Then $g_{S^4}(X_i,X_j)(x_0)=\delta_{ij}$, $D_{X_i}X_j(x_0)=0$, and
		\begin{align*}
			[X_i,X_j]=0,\quad[Y_i,Y_j]=[T,Y_i]=0.
		\end{align*}
		The Euclidean metric has the warped product form
		\begin{align*}
			g=dr^2+r^2g_{S^4}.
		\end{align*}
		Applying Bishop and O'Neill's connection formulas \cite[Lemma 7.3]{BN} with base $(0,1)$, fiber $S^4$, and warping function $f(r)=r$, we obtain
		\begin{align*}
			d_TT&=0,\\
			d_TY_i&=\frac1rY_i,\\
			d_{Y_i}T&=\frac1rY_i,\\
			d_{Y_i}Y_j&=\iota_{r*}(D_{X_i}X_j)-\frac1rg(Y_i,Y_j)T.
		\end{align*}
		Moreover,
		\begin{align*}
			g(Y_i,Y_j)|_{rx}=r^2g_{S^4}(X_i,X_j)|_x.
		\end{align*}
		Thus $\{T,r^{-1}Y_1,\dots,r^{-1}Y_4\}$ is an orthonormal basis at $rx_0$, and $d_{Y_i}Y_j|_{rx_0}=-r\delta_{ij}T|_{rx_0}$.
		
		Since $\pi\circ\iota_r=\operatorname{id}_{S^4}$, we have $\pi_*T=0,\ \pi_*Y_i=X_i$. Using $dB=d\pi^*b=\pi^*db$, we have
		\begin{align*}
			dB(\iota_{r*}X_i,\iota_{r*}X_j)_{rx_0}=\pi^*db(\iota_{r*}X_i,\iota_{r*}X_j)=db(\pi_*\iota_{r*}X_i,\pi_*\iota_{r*}X_j)|_{x_0}=db(X_i,X_j)|_{x_0}
		\end{align*}
		and
		\begin{align*}
			dB(T,\iota_{r*}X_i)=\pi^*db(T,\iota_{r*}X_i)=db(\pi_*T,\pi_*\iota_{r*}X_i)=0
		\end{align*}
		for any $1\le i,j\le4$. We also have
		\begin{align*}
			B\wedge B(\iota_{r*}X_i,\iota_{r*}X_j)=[\pi^*b(\iota_{r*}X_i),\pi^*b(\iota_{r*}X_j)]=[b(\pi_*\iota_{r*}X_i),b(\pi_*\iota_{r*}X_j)]=b\wedge b(X_i,X_j)
		\end{align*}
		and
		\begin{align*}
			B\wedge B(T,\iota_{r*}X_i)=[b(\pi_*T),b(\pi_*\iota_{r*}X_i)]=0
		\end{align*}
		Hence, we have
		\begin{align*}
			&F_B(\iota_{r*}X_i,\iota_{r*}X_j)|_{rx}=F_b(X_i,X_j)|_x,\\
			&i_TF_B=0.
		\end{align*}
		Since $i_{d_{Y_i}Y_j}F_B=0$ at $rx_0$ and $D_{X_i}X_j=0$ at $x_0$, we have
		\begin{align*}
			\delta^BF_B(Y_j)\mid_{rx_0}&=-\nabla^B_TF_B(T,Y_j)-r^{-2}\sum_{i=1}^4\nabla^B_{Y_i}F_B(Y_i,Y_j)=-r^{-2}\sum_{i=1}^4\nabla^B_{Y_i}(F_B(Y_i,Y_j))\\
			&=-r^{-2}\sum_{i=1}^4\nabla^b_{X_i}(F_b(X_i,X_j))\mid_{x_0}=-r^{-2}\sum_{i=1}^4\nabla^b_{X_i}F_b(X_i,X_j)=r^{-2}\delta^bF_b(X_j)\\
			&=0
		\end{align*}
		and
		\begin{align*}
			\delta^BF_B(T)\mid_{rx_0}&=-r^{-2}\sum_{i=1}^4\nabla_{Y_i}^BF_B(Y_i,T)\\
			&=-r^{-2}\sum_{i=1}^4(\nabla^B_{Y_i}(F_B(Y_i,T))-F_B(d_{Y_i}Y_i,T)-F_B(Y_i,d_{Y_i}T))\\
			&=0,
		\end{align*}
		where we use $d_{Y_i}Y_i=-rT$ and $d_{Y_i}T=\frac1rY_i$ at $rx_0$. Thus $B$ is a Yang-Mills connection on $B^5\backslash\{0\}$.
	\end{proof}
	Even if $b$ is a Yang-Mills connection over $S^4$, $\pi^*b$ need not be stationary over $B^5$. In fact, we have the following proposition.
	\begin{proposition}\label{lem:stationary}
		Assume that $b$ is a smooth Yang-Mills connection over $S^4$. Then $B=\pi^*b$ defines a weak Yang-Mills connection over $B^5$, and it is stationary if and only if
		\begin{align*}
			\int_{S^4}x|F_b|^2d\Theta=(\int_{S^4}y^1|F_b|^2d\Theta,\dots, \int_{S^4}y^5|F_b|^2d\Theta)=0,
		\end{align*}
		where $x=(y^1,\dots,y^5)\in S^4$ and $d\Theta$ is the volume form of $S^4$.
	\end{proposition}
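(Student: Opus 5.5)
The plan has two parts. First, show that $B=\pi^*b$ is a weak Yang--Mills connection on all of $B^5$. Second, compute the stationarity defect explicitly, which turns out to be concentrated at the origin.

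\emph{Integrability.} By Lemma \ref{lem:F_B(r,theta)=F_b(theta)}, $F_B$ has no radial component and $|F_B|(rx)=r^{-2}|F_b|(x)$, since the orthonormal frame is $r^{-1}Y_i$. Hence $|F_B|^2\sim r^{-4}$, which is integrable on $B^5$: $\int_{B^5}|F_B|^2=\int_0^1 r^{-4}r^4\,dr\int_{S^4}|F_b|^2<\infty$. Similarly $|B|\sim r^{-1}$, so $B\in L^4$, and $|\nabla B|\sim r^{-2}$, so $B\in W^{1,2}$.

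\emph{Weak equation.} I will use a cutoff argument. Let $\eta_\epsilon$ be a radial cutoff that vanishes on $B_\epsilon$, equals $1$ outside $B_{2\epsilon}$, and satisfies $|d\eta_\epsilon|\le C/\epsilon$. For compactly supported $\varphi$, Lemma \ref{lem:F_B(r,theta)=F_b(theta)} gives $\int\langle F_B,d^B(\eta_\epsilon\varphi)\rangle=0$. The error term is $\int\langle F_B,d\eta_\epsilon\wedge\varphi\rangle$, bounded by $C\epsilon^{-1}\int_{B_{2\epsilon}}r^{-2}\le C\epsilon^{2}\to0$. The other terms converge by dominated convergence, since $|F_B||d^B\varphi|\lesssim r^{-2}\cdot r^{-1}\in L^1$.

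\emph{Stationarity.} On $B^5\setminus\{0\}$, $B$ is smooth and Yang--Mills, so $\operatorname{div}T=0$ there by the classical computation. For a test field $X$, I apply the identity to $\eta_\epsilon X$. The terms carrying $\eta_\epsilon$ converge to the full integral in Definition \ref{def:stationary}(ii), because $|T||DX|\lesssim r^{-4}\in L^1$. This gives
\[
\int_{B^5}\langle T,DX\rangle=-\lim_{\epsilon\to0}\int T(\nabla\eta_\epsilon,X).
\]
Using $i_{\partial_r}F_B=0$, we get $T(\partial_r,\cdot)=|F_B|^2\,dr$. Since $\nabla\eta_\epsilon=\eta_\epsilon'(r)\partial_r$, the boundary term equals
\[
\int_\epsilon^{2\epsilon}\eta_\epsilon'(r)\,r^{-4}r^4\int_{S^4}|F_b|^2(x)\,\langle x,X(rx)\rangle\,d\Theta\,dr .
\]
Expanding $X(rx)=X(0)+O(r)$, this tends to $\int_{S^4}|F_b|^2\langle x,X(0)\rangle\,d\Theta$, because $\int\eta_\epsilon'=1$ and the $O(r)$ remainder is $O(\epsilon)$.

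It follows that the stationarity integral equals, up to sign, $X(0)\cdot\int_{S^4}x|F_b|^2\,d\Theta$. This vanishes for all $X$ if and only if the moment vanishes, and choosing $X$ with $X(0)$ equal to the moment gives the converse. The main obstacle is getting the signs and normalizations in $T$ right, in particular the factor from $T(\partial_r,\partial_r)=|F|^2$ with no $-2\sum$ contribution. The remaining steps are routine once the radial structure from the lemma is used.
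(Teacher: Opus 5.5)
Your proposal is correct and follows essentially the same route as the paper: a radial cutoff at the origin, vanishing of the contribution away from $0$, and evaluation of the concentrated term via $i_{\partial_r}F_B=0$ and the $r^{-4}$ homogeneity of $|F_B|^2$, which yields exactly the paper's formula $-X(0)\cdot\int_{S^4}x|F_b|^2\,d\Theta$. The differences are minor. Where you invoke $\operatorname{div}T=0$ for the smooth Yang--Mills connection on $B^5\setminus\{0\}$, the paper re-derives this slice by slice from the stationarity of $b$ on $S^4$, with the $4r^{-5}$ terms from the divergence and from the stress term cancelling. In the weak equation, the paper notes that $\langle F_B,d\eta\wedge\varphi\rangle\equiv0$ pointwise instead of estimating this term by $C\epsilon^2$.
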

	\begin{proof}
		We first show that $B$ is a weak solution of the Yang-Mills equation in $B^5$. The smoothness of $b$ and the homogeneity of $B$ imply that $B\in W^{1,2}(B^5)\cap L^4(B^5)$ and $F_B\in L^2(B^5)$. For any $\epsilon\in(0,\frac12)$, take a cut-off function $\eta(r)$ such that $\eta\equiv0$ on $(2\epsilon,1)$, $\eta\equiv1$ on $[0,\epsilon)$ and $|d\eta|\le\frac{C}\epsilon$. For any $\varphi\in\Omega^1(B^5,\mathfrak{so}(5))$ with compact support, the identity $i_TF_B=0$ gives $\langle F_B,d\eta\wedge\varphi\rangle=0$. Since $B$ is Yang-Mills away from the origin,
		\begin{align*}
			\int_{B^5}(1-\eta)\langle F_B,d^B\varphi\rangle dV=\int_{B^5}\langle F_B,d^B((1-\eta)\varphi)\rangle dV=0.
		\end{align*}
		Letting $\epsilon\to0$, we conclude that $B$ is a weak Yang-Mills connection over $B^5$.
		
		To determine when $B$ is stationary, for any vector field $X\in\mathscr{X}(B^5)$ with compact support in $B^5$ and any $rx\in B^5\backslash\{0\}$, assume $\{e_i\}$ is an orthonormal basis of $T_xS^4$ and let $\tilde e_0=T=\frac\partial{\partial r}$, $\tilde e_i=\frac1r\iota_{r*}e_i$ for $i=1,\dots,4$. Then $\{\tilde e_i\mid0\le i\le4\}$ is an orthonormal basis of $T_{rx}(B^5\backslash\{0\})$. Note that
		\begin{align*}
			&\int_{B^5}|F_B|^2\operatorname{div}(X)-2\sum_{i,j,k=0}^4\langle F_B(\tilde e_i,\tilde e_k),F_B(\tilde e_j,\tilde e_k)\rangle g(d_{\tilde e_i}X,\tilde e_j)dV\\
			=&\int_{B_{2\epsilon}(0)}|F_B|^2\operatorname{div}(\eta X)-2\sum_{i,j,k=0}^4\langle F_B(\tilde e_i,\tilde e_k),F_B(\tilde e_j,\tilde e_k)\rangle g(d_{\tilde e_i}(\eta X),\tilde e_j)dV\\
			&+\int_{B_1(0)\backslash B_\epsilon(0)}|F_B|^2\operatorname{div}((1-\eta)X)-2\sum_{i,j,k=0}^4\langle F_B(\tilde e_i,\tilde e_k),F_B(\tilde e_j,\tilde e_k)\rangle g(d_{\tilde e_i}((1-\eta) X),\tilde e_j)dV.
		\end{align*}
		 Define $X^T=X-g(X,\tilde e_0)\tilde e_0$, the component of $X$ tangent to $\iota_r(S^4)$. Unless otherwise indicated, quantities on $B^5$ and $S^4$ are evaluated at $rx$ and $x$, respectively, and the evaluation points are omitted. Using Lemma \ref{lem:F_B(r,theta)=F_b(theta)}, we have
		\begin{align*}
			&\sum_{i,j,k=0}^4\langle F_B(\tilde e_i,\tilde e_k),F_B(\tilde e_j,\tilde e_k)\rangle g(d_{\tilde e_i}((1-\eta) X),\tilde e_j)\\
			=&\frac{1}{r^4}\sum_{i,j,k=1}^4\langle F_b(e_i,e_k),F_b(e_j,e_k)\rangle g(d_{\tilde e_i}((1-\eta)X),\tilde e_j)\\
			=&\frac{1}{r^4}\sum_{i,j,k=1}^4\langle F_b(e_i,e_k),F_b(e_j,e_k)\rangle(g(d_{\tilde e_i}((1-\eta)X^T),\tilde e_j)+(1-\eta)g(X,\tilde e_0)g(d_{\tilde e_i}\tilde e_0,\tilde e_j))\\
			=&\frac{1}{r^4}\sum_{i,j,k=1}^4\langle F_b(e_i,e_k),F_b(e_j,e_k)\rangle g_{S^4}(D_{e_i}((1-\eta)\pi_*X^T),e_j)+\frac2{r^5}(1-\eta)g(X,\tilde e_0)|F_b|^2,
		\end{align*}
		where we use $g(d_{\tilde e_i}\tilde e_0,\tilde e_j)=\frac1r\delta_{ij}$. Since $d_{\tilde e_0}\tilde e_0=0$ and $d_{\tilde e_i}\tilde e_0=\frac1r\tilde e_i$ for $i=1,\dots,4$, we have
		\begin{align*}
			\operatorname{div}((1-\eta)X)=&\sum_{i=0}^4g(d_{\tilde e_i}((1-\eta)g(X,\tilde e_0)\tilde e_0+(1-\eta)X^T),\tilde e_i)\\
			=&\frac\partial{\partial r}((1-\eta)g(X,\tilde e_0))+\frac4r(1-\eta)g(X,\tilde e_0)+\operatorname{div}_{S^4}((1-\eta)\pi_*X^T).
		\end{align*}
		Therefore,
		\begin{align*}
			|F_B|^2\operatorname{div}((1-\eta)X)=(\frac1{r^4}|F_b|^2\frac{\partial }{\partial r}((1-\eta)g(X,\tilde e_0))+\frac{1}{r^4}|F_b|^2\operatorname{div}_{S^4}((1-\eta)\pi_*X^T)+\frac4{r^5}(1-\eta)g(X,\tilde e_0)|F_b|^2).
		\end{align*}
		Since $b$ is a smooth Yang-Mills connection, it is stationary (see \cite[(2.1.8)]{Tian}). Applying the stationarity identity on $S^4$ for each fixed $r$, we have
		\begin{align*}
			&\int_{B_1\backslash B_\epsilon}\frac{1}{r^4}|F_b|^2\operatorname{div}_{S^4}((1-\eta)\pi_*X^T)-2\frac{1}{r^4}\sum_{i,j,k=1}^4\langle F_b(e_i,e_k),F_b(e_j,e_k)\rangle g_{S^4}(D_{e_i}((1-\eta)\pi_*X^T),e_j)dV\\
			=&\int_\epsilon^1\int_{S^4}|F_b|^2\operatorname{div}_{S^4}((1-\eta)\pi_*X^T)-2\sum_{i,j,k=1}^4\langle F_b(e_i,e_k),F_b(e_j,e_k)\rangle g_{S^4}(D_{e_i}((1-\eta)\pi_*X^T),e_j)d\Theta dr\\
			=&0.
		\end{align*}
		Using Stokes' formula, we have
		\begin{align*}
			&\int_{B_1\backslash B_\epsilon}\frac1{r^4}|F_b|^2\frac{\partial }{\partial r}((1-\eta)g(X,\tilde e_0))dV\\
			=&\int_{S^4}|F_b|^2\int_\epsilon^1\frac{\partial }{\partial r}((1-\eta)g(X,\tilde e_0))drd\Theta\\
			=&0.
		\end{align*}
		Hence we obtain
		\begin{align*}
			\int_{B_1(0)\backslash B_\epsilon(0)}|F_B|^2\operatorname{div}((1-\eta)X)-2\sum_{i,j,k=0}^4\langle F_B(\tilde e_i,\tilde e_k),F_B(\tilde e_j,\tilde e_k)\rangle g(d_{\tilde e_i}((1-\eta) X),\tilde e_j)dV=0.
		\end{align*}
		Since
		\begin{align*}
			\operatorname{div}(\eta X)=\frac{\partial}{\partial r}(\eta g(X,\tilde e_0))+\eta\sum_{i=1}^4g(d_{\tilde e_i}X,\tilde e_i),
		\end{align*}
		we have
		\begin{align*}
			&\int_{B_{2\epsilon}(0)}|F_B|^2\operatorname{div}(\eta X)-2\sum_{i,j,k=0}^4\langle F_B(\tilde e_i,\tilde e_k),F_B(\tilde e_j,\tilde e_k)\rangle g(d_{\tilde e_i}(\eta X),\tilde e_j)dV\\
			=&\int_{B_{2\epsilon}(0)}|F_B|^2(\frac{\partial}{\partial r}(\eta g(X,\tilde e_0))+\eta \sum_{i=1}^4g(d_{\tilde e_i}X,\tilde e_i))-2\eta\sum_{i,j,k=1}^4\langle F_B(\tilde e_i,\tilde e_k),F_B(\tilde e_j,\tilde e_k)\rangle g(d_{\tilde e_i}X,\tilde e_j)dV\\
			=&\int_{S^4}|F_b|^2\int_0^{2\epsilon}(\frac{\partial}{\partial r}(\eta g(X,\tilde e_0))+\eta \sum_{i=1}^4g(d_{\tilde e_i}X,\tilde e_i))drd\Theta\\
			&-2\int_{S^4}\sum_{i,j,k=1}^4\langle F_b(e_i,e_k),F_b(e_j,e_k)\rangle\int_0^{2\epsilon}\eta g(d_{\tilde e_i}X,\tilde e_j)drd\Theta\\
			=&-\lim_{r\to0}\int_{S^4}|F_b|^2g(X,\tilde e_0)|_{rx}d\Theta+O(\epsilon).
		\end{align*}
		Letting $\epsilon\to0$ and noting that $\lim_{r\to0}g(X,\tilde e_0)|_{rx}=\sum_{i=1}^5y^ig(X,\frac\partial{\partial y^i})|_0$, we obtain
		\begin{align*}
			\int_{B^5}|F_B|^2\operatorname{div}(X)-2\sum_{i,j,k=0}^4\langle F_B(\tilde e_i,\tilde e_k),F_B(\tilde e_j,\tilde e_k)\rangle g(d_{\tilde e_i}X,\tilde e_j)dV=-\sum_{i=1}^5g(X,\frac\partial{\partial y^i})\mid_0\int_{S^4}y^i|F_b|^2d\Theta.
		\end{align*}
		Since $g(X,\frac\partial{\partial y^i})|_0$ is arbitrary, we can see that $B$ is stationary if and only if
		\begin{align*}
			\int_{S^4}y^i|F_b|^2d\Theta=0\quad\text{for any }i=1,\dots,5.
		\end{align*}
	\end{proof}
	\begin{remark}
		The preceding lemma and proposition remain valid for $S^4\times\mathbb{R}^m$ with structure group $SO(m)$, since their proofs are independent of the rank.
	\end{remark}
	Let $\{e_i\mid1\le i\le4\}$ and $\{T=\frac\partial{\partial r},\tilde e_i=\frac1r\iota_{r*}e_i\mid1\le i\le4\}$ be local orthonormal frames of $TS^4$ and $T(B^5\backslash\{0\})$, respectively. Then $\pi_*T=0$ and $\pi_*\tilde e_i=\frac1re_i$. For any $b\in\Omega^1(\mathfrak{g}_E)$ and $p=1,2$, we have
	\begin{align}\label{pi^*b L^p}
		\int_{B^5}|\pi^*b|^{2p}dx=\int_{B^5}\frac1{r^{2p}}|b|^{2p}(\frac{x}{|x|})dx=\frac1{5-2p}\int_{S^4}|b|^{2p}d\Theta.
	\end{align}
	For any $x_1\in S^4$ and $r\in(0,1)$, choose normal coordinates centered at $x_1$ and let $X_i,T,Y_i$ be the corresponding local vector fields defined in Lemma \ref{lem:F_B(r,theta)=F_b(theta)}. In the Sobolev estimates below, $\nabla$ denotes the connection induced by the flat connection in the standard trivialization and the Levi-Civita connection of the relevant base metric. Then at $rx_1$, we have
	\begin{align*}
		|\nabla\pi^*b|^2&=\sum_{i=1}^4(\frac1{r^2}|\nabla_T\pi^*b(Y_i)|^2+\frac1{r^2}|\nabla_{Y_i}\pi^*b(T)|^2+\frac1{r^4}\sum_{j=1}^4|\nabla_{Y_j}\pi^*b(Y_i)|^2)\\
		&=\sum_{i=1}^4(\frac2{r^4}|\pi^*b(Y_i)|^2+\frac1{r^4}\sum_{j=1}^4|\nabla_{\pi_*Y_j}b(\pi_*Y_i)|^2)\\
		&=\frac2{r^4}|b|^2(\frac{x_1}{|x_1|})+\frac1{r^4}|\nabla b|^2(\frac{x_1}{|x_1|}),
	\end{align*}
	where we use
	\begin{align*}
		&\nabla_T\pi^*b(Y_i)=\frac\partial{\partial r}(\pi^*b(Y_i))-\pi^*b(d_TY_i)=-\frac1r\pi^*b(Y_i),\\
		&\nabla_{Y_i}\pi^*b(T)=Y_i(\pi^*b(T))-\pi^*b(d_{Y_i}T)=-\frac1r\pi^*b(Y_i),\\
		&\nabla_{Y_j}\pi^*b(Y_i)=Y_j(\pi^*b(Y_i))-\pi^*b(d_{Y_j}Y_i)=Y_j(\pi^*b(Y_i))=(\pi_*Y_j)(b(\pi_*Y_i))|_{x_1}=\nabla_{\pi_*Y_j}b(\pi_*Y_i)|_{x_1}.
	\end{align*}
	Thus
	\begin{align}\label{|db|_{L^2}}
		\int_{B^5}|\nabla\pi^*b|^2dx=\int_{B^5}\frac2{r^4}|b|^2(\frac{x}{|x|})+\frac1{r^4}|\nabla b|^2(\frac x{|x|})dx=\int_{S^4}2|b|^2+|\nabla b|^2d\Theta.
	\end{align}
	The preceding identities show that radial pullback is a bounded linear map from $W^{1,2}(S^4)$ to $W^{1,2}(B^5)$. 
	\section{The construction of stationary Yang-Mills connections with a non-stationary limit}
	In this section, we construct a sequence of stationary Yang-Mills connections over $B^5$ whose weak limit is not stationary.
	\begin{lemma}
		Let $n$ be the unit normal vector field of $S^4$. Define
		\begin{align}
			a=ndn^T-dn\cdot n^T\in\Omega^1(\mathfrak{g}_E).
		\end{align}
		Then the curvature tensor $F_a=da+a\wedge a$ of $a$ has the form
		\begin{align*}
			F_a=dn\wedge dn^T
		\end{align*}
		with
		\begin{align}\label{|F_a|^2}
			|F_a|^2=12.
		\end{align}
		Moreover, $a$ is a Yang-Mills connection.
	\end{lemma}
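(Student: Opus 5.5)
The plan is a direct computation in the ambient coordinates of $\mathbb{R}^5$. Throughout, $n:S^4\to\mathbb{R}^5$ is a column vector, and $d n$ is a vector-valued $1$-form. The identities that do all the work are
\begin{align*}
n^Tn=1,\qquad n^T dn=dn^T n=0,\qquad dn(X)=X \text{ for } X\in TS^4,
\end{align*}
the last being the Gauss formula $d_Xn=X$ recalled above.

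\textbf{Curvature.} First I check that for matrix-valued $1$-forms the wedge $\phi\wedge\psi$ of the Preliminaries coincides with the matrix product of forms; with the paper's normalization it equals $\phi(X)\psi(Y)-\phi(Y)\psi(X)$ (I will verify this constant factor at the outset). Then I compute $da$ termwise:
\begin{align*}
d(n\,dn^T)=dn\wedge dn^T,\qquad d(dn\,n^T)=-dn\wedge dn^T,\qquad\text{so}\qquad da=2\,dn\wedge dn^T.
\end{align*}
Next I expand $a\wedge a$ into four products.
\begin{itemize}
\item Any product containing $dn^T\wedge n$ or $n^T\wedge dn$ vanishes, since $n^Tdn=0$.
\item The product containing $dn^T\wedge dn=\sum_k dn_k\wedge dn_k$ vanishes, since this scalar $2$-form is zero.
\item The only surviving term is $-dn\,n^T\wedge n\,dn^T=-dn\wedge dn^T$.
\end{itemize}
Hence $F_a=dn\wedge dn^T$.

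\textbf{The norm.} At a point $x$ take an orthonormal frame $e_1,\dots,e_4$ of $T_xS^4$, viewed as vectors in $\mathbb{R}^5$. Since $dn(e_i)=e_i$,
\begin{align*}
F_a(e_i,e_j)=e_ie_j^T-e_je_i^T,\qquad |F_a(e_i,e_j)|^2=\operatorname{Tr}\bigl((e_ie_j^T-e_je_i^T)^T(e_ie_j^T-e_je_i^T)\bigr)=2 \quad (i\neq j).
\end{align*}
Summing with the factor $\tfrac12$ over the $12$ ordered pairs $i\ne j$ gives $|F_a|^2=12$.

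\textbf{Yang--Mills equation.} I work at $x_0$ in normal coordinates as in Lemma \ref{lem:F_B(r,theta)=F_b(theta)}, so $D_{e_i}e_j(x_0)=0$. By the Gauss formula the ambient derivative is then purely normal, $d_{e_i}e_j=-\delta_{ij}n$. With $a(e_i)=ne_i^T-e_in^T$,
\begin{align*}
\nabla^a_{e_i}F_a(e_i,e_j)=e_i\bigl(F_a(e_i,e_j)\bigr)+[a(e_i),F_a(e_i,e_j)].
\end{align*}
The plan for the two pieces is as follows.
\begin{itemize}
\item Differentiating $e_ie_j^T-e_je_i^T$ and summing over $i$ produces terms of the form $-4\,(ne_j^T-e_jn^T)$ coming from $d_{e_i}e_i=-n$. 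The terms with $d_{e_i}e_j$ for $j\neq i$ vanish at $x_0$, and the term $i=j$ drops out.
\item The commutator is a product of rank-one matrices, simplified using $n^Te_k=0$ and $e_k^Te_l=\delta_{kl}$. I expect $[a(e_i),F_a(e_i,e_j)]=ne_j^T-e_jn^T$ for each $i\neq j$, so the sum over $i$ gives $+4\,(ne_j^T-e_jn^T)$.
\end{itemize}
The two contributions then cancel, so $\delta^aF_a=0$.

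As a cross-check, $a$ is $SO(5)$-equivariant: for $R\in SO(5)$, pulling back $a$ by the rotation $x\mapsto Rx$ gives $RaR^T$. Hence $\delta^aF_a$ is an equivariant $\mathfrak{so}(5)$-valued $1$-form, which restricts the possible form of the answer; the cancellation is a single-constant check.

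\textbf{Main obstacle.} The main obstacle is the bookkeeping of signs and factors. This concerns the paper's convention for $\phi\wedge\psi$ (the factor $\tfrac12$ together with brackets versus the matrix product) and the signs in the commutator computation. The first thing I will do is fix the wedge convention on the explicit example $a(e_1),a(e_2)$, to make sure the coefficient of $dn\wedge dn^T$ in $F_a$ is exactly $1$ rather than $2$ or $\tfrac12$.
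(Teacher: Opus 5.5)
Your proposal is correct and follows the paper's proof essentially step for step: the same ambient computation giving $da=2\,dn\wedge dn^T$ and $a\wedge a=-dn\wedge dn^T$, the same frame computation of $|F_a|^2=12$, and the same normal-frame check of $\delta^aF_a=0$ at $x_0$ using $d_{e_i}e_j=-\delta_{ij}n$. The paper evaluates $a(X)a(Y)-a(Y)a(X)$ directly, which sidesteps your convention worry. Note that the paper's $\phi\wedge\psi$ agrees with the matrix wedge only when $\phi=\psi$, so your four-term expansion of $a\wedge a$ must be read with the matrix wedge.

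There is one arithmetic slip to fix. Each piece of the Yang--Mills computation is $\mp3\,(ne_j^T-e_jn^T)$, not $\mp4$, because only the three indices $i\neq j$ contribute. In the derivative piece, the term $d_{e_j}e_j=-n$ contributes $+(ne_j^T-e_jn^T)$ and offsets one of the four $d_{e_i}e_i$ terms. Since the same miscount appears in both pieces, the cancellation and the conclusion $\delta^aF_a=0$ still stand.
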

	\begin{proof}
		It is easy to verify that
		\begin{align*}
			da=2dn\wedge dn^T.
		\end{align*}
		Using $n^Tdn=dn^T\cdot n=0$ and $n^Tn=1$, we have
		\begin{align*}
			&a(X)a(Y)\\
			=&(n\cdot d_Xn^T-d_Xn\cdot n^T)(n\cdot d_Yn^T-d_Yn\cdot n^T)\\
			=&-n\cdot d_Xn^T\cdot d_Yn\cdot n^T-d_Xn\cdot d_Yn^T
		\end{align*}
		and thus
		\begin{align*}
			a\wedge a(X,Y)=a(X)a(Y)-a(Y)a(X)=-d_Xn\cdot d_Yn^T+d_Yn\cdot d_Xn^T=-dn\wedge dn^T(X,Y).
		\end{align*}
		Hence we obtain $F_a=da+a\wedge a=dn\wedge dn^T$.
		
		For any $x\in S^4$ and $X,Y\in T_xS^4$, regarding $X,Y$ as vectors in $\mathbb{R}^5$ and using $d_Xn=X$, $d_Yn=Y$, we have $F_a(X,Y)=XY^T-YX^T$. Therefore, letting $\{e_i\}_{i=1}^4\subset\mathbb{R}^5$ be an orthonormal basis of $T_xS^4$, we have
		\begin{align*}
			|F_a|^2(x)=\frac12\sum_{i,j=1}^4|F_a(e_i,e_j)|^2=-\frac12\sum_{i,j=1}^4tr((e_ie_j^T-e_je_i^T)^2)=12.
		\end{align*}
		
		For any $x_0\in S^4$, take $\{e_1,\dots,e_4\}$ to be a local orthonormal frame near $x_0$ with $De_i(x_0)=0$. Then at $x_0$, for any $1\le j\le4$, we have
		\begin{align*}
			\sum_{i=1}^4\nabla^a_{e_i}F_a(e_i,e_j)=\sum_{i=1}^4\nabla^a_{e_i}(F_a(e_i,e_j))=\sum_{i=1}^4\nabla^a_{e_i}(e_ie_j^T-e_je_i^T)=\sum_{i=1}^4d_{e_i}(e_ie_j^T-e_je_i^T)+[a(e_i),(e_ie_j^T-e_je_i^T)],
		\end{align*}
		where by using $d_{e_i}e_j=-\delta_{ij}n$ at $x_0$, we have
		\begin{align*}
			\sum_{i=1}^4d_{e_i}(e_ie_j^T-e_je_i^T)=\sum_{i=1}^4-ne_j^T-\delta_{ij}e_in^T+\delta_{ij}ne_i^T+e_jn^T=3(e_jn^T-ne_j^T)
		\end{align*}
		and by direct calculation, we have
		\begin{align*}
			\sum_{i=1}^4[a(e_i),(e_ie_j^T-e_je_i^T)]=\sum_{i=1}^4ne_j^T-\delta_{ij}ne_i^T+\delta_{ij}e_in^T-e_jn^T=3(ne_j^T-e_jn^T).
		\end{align*}
		Therefore, we obtain
		\begin{align*}
			\delta^aF_a(e_j)=-\sum_{i=1}^4\nabla^a_{e_i}F_a(e_i,e_j)=-\sum_{i=1}^4d_{e_i}(e_ie_j^T-e_je_i^T)+[a(e_i),(e_ie_j^T-e_je_i^T)]=0.
		\end{align*}
	\end{proof}
	Let $x_0=(0,0,0,0,-1)\in S^4$. Choose local coordinates $(x^1,\dots,x^4)$ centered at $x_0$ and write $a=\sum_{i=1}^4a_i(x)dx^i$ near $x_0$. Take a cut-off function $\chi\in C^\infty(S^4)$ supported in this coordinate neighborhood, with $\chi\equiv1$ near $x_0$ and $-x_0\notin\operatorname{spt}(\chi)$. Define a gauge transformation
	\begin{align*}
		g(x)=\left\{
		\begin{array}{ll}
			\exp(-\chi(x)\sum_{i=1}^4x^ia_i(x)),&\operatorname{spt}(\chi),\\
			I,&\operatorname{spt}(\chi)^c.
		\end{array}
		\right.
	\end{align*}
	Define $\hat a=a^g=g^{-1}dg+g^{-1}ag$. Then $\hat a$ is a Yang-Mills connection. Since $g(x_0)=I$ and $dg(x_0)=-a(x_0)$, we have $\hat a(x_0)=0$. Moreover, $g=I$ near $-x_0$, so $\hat a(-x_0)=a(-x_0)$.
	
	Consider the inverse stereographic projection
	\begin{align*}
		\phi:\mathbb{R}^4\to S^4\backslash\{x_0\},\quad x\mapsto(\frac{2x}{1+|x|^2},\frac{1-|x|^2}{1+|x|^2}).
	\end{align*}
	For any $\lambda>0$, define a diffeomorphism $f_\lambda:S^4\to S^4$ by $f_\lambda(x_0)=x_0$ and
	\begin{align*}
		f_\lambda(\phi(x))=\phi(\frac{x}\lambda).
	\end{align*}
	Let $(x^1,\dots,x^4)$ be the Euclidean coordinates on $\mathbb{R}^4$. By direct calculation, we have
	\begin{align*}
		\phi^*(f_\lambda^*g_{S^4})(\frac\partial{\partial x^i},\frac\partial{\partial x^j})=\frac{4\lambda^2}{(\lambda^2+|x|^2)^2}\delta_{ij}=\frac{\lambda^2(1+|x|^2)^2}{(\lambda^2+|x|^2)^2}\phi^*g_{S^4}(\frac\partial{\partial x^i},\frac\partial{\partial x^j}).
	\end{align*}
	Therefore, $f_\lambda^*g_{S^4}$ is conformal to $g_{S^4}$. Let
	\begin{align}
		b'_\lambda=f_\lambda^*\hat a.
	\end{align}
	Then we have $b'_1=\hat a$ and $F_{b'_\lambda}=f_\lambda^*F_{\hat a}$. Let $*=*_{g_{S^4}}$, $\tilde*=*_{f_\lambda^*g_{S^4}}$, and let $\tilde\delta^{b'_\lambda}$ denote the formal adjoint of $d^{b'_\lambda}$ with respect to $f_\lambda^*g_{S^4}$. By naturality under pullback,
	\begin{align*}
		\tilde\delta^{b'_\lambda}F_{b'_\lambda}=f_\lambda^*(\delta^{\hat a}F_{\hat a})=0.
	\end{align*}
	Since the Hodge operator satisfies
	\begin{align*}
		*_{e^{2\varphi}g}\psi=e^{(4-2k)\varphi}*_g\psi
	\end{align*}
	for any $\varphi\in C^\infty(S^4)$ and $\psi\in\Omega^k(\mathfrak{g}_E)$, we have
	\begin{align*}
		\tilde\delta^{b'_\lambda}F_{b'_\lambda}=-\tilde*\,d^{b'_\lambda}\tilde*F_{b'_\lambda}=-\frac{(\lambda^2+|x|^2)^2}{\lambda^2(1+|x|^2)^2}*d^{b'_\lambda}*F_{b'_\lambda}=\frac{(\lambda^2+|x|^2)^2}{\lambda^2(1+|x|^2)^2}\delta^{b'_\lambda}F_{b'_\lambda}.
	\end{align*}
	Therefore, $\delta^{b'_\lambda}F_{b'_\lambda}=0$ on $S^4\backslash\{x_0\}$. By smoothness, this also holds at $x_0$. Thus $b'_\lambda$ is a Yang-Mills connection on $(S^4,g_{S^4})$. Let
	\begin{align*}
		\{e_i'=\frac{\lambda^2+|x|^2}{2\lambda}f_{\lambda*}\phi_*(\frac\partial{\partial x^i})\mid1\le i\le4\},\quad\{e_i=\frac{1+|x|^2}2\phi_*(\frac\partial{\partial x^i})\mid1\le i\le4\}
	\end{align*}
	be orthonormal bases at $f_\lambda\circ\phi(x)$ and $\phi(x)$, respectively. Then we have
	\begin{align*}
		|F_{b'_\lambda}|^2(\phi(x))&=\frac12\sum_{i,j=1}^4|F_{b'_\lambda}(e_i,e_j)|^2=\frac12\sum_{i,j=1}^4|F_{\hat a}(f_{\lambda*}e_i,f_{\lambda*}e_j)|^2\\
		&=\frac{\lambda^4(1+|x|^2)^4}{2(\lambda^2+|x|^2)^4}\sum_{i,j=1}^4|F_{\hat a}(e_i',e_j')|^2=\frac{\lambda^4(1+|x|^2)^4}{(\lambda^2+|x|^2)^4}|F_{\hat a}|^2(f_\lambda\circ\phi(x))\\
		&=\frac{12\lambda^4(1+|x|^2)^4}{(\lambda^2+|x|^2)^4},
	\end{align*}
	where we use \eqref{|F_a|^2} and $|F_{\hat a}|=|F_a|$. In stereographic coordinates, the volume form is given by
	\begin{align*}
		d\Theta=\frac{16}{(1+|x|^2)^4}dx.
	\end{align*}
	Therefore, we have
	\begin{align*}
		\int_{S^4}|F_{b'_\lambda}|^2d\Theta=\int_{\mathbb{R}^4}\frac{192\lambda^4}{(\lambda^2+|x|^2)^4}dx=192\lambda^4|S^3|\int_0^\infty\frac{r^3}{(\lambda^2+r^2)^4}dr.
	\end{align*}
	Substituting $s=\frac{r^2}{\lambda^2}$ into the above equation, we have
	\begin{align*}
		\int_0^\infty\frac{r^3}{(\lambda^2+r^2)^4}dr=\frac1{2\lambda^4}\int_0^\infty\frac{s}{(s+1)^4}ds.
	\end{align*}
	Thus
	\begin{align}\label{YM(b'_lambda;S^4)}
		\int_{S^4}|F_{b'_\lambda}|^2d\Theta=96|S^3|\int_0^\infty\frac{s}{(s+1)^4}ds:=C_0.
	\end{align}
	Using a similar argument, we obtain
	\begin{equation}
		\begin{split}
			&\int_{S^4}x|F_{b'_\lambda}|^2d\Theta\\
			=&192\int_{\mathbb{R}^4}\frac{\lambda^4\phi(x)}{(\lambda^2+|x|^2)^4}dx\\
			=&-192\lambda^4\int_{\mathbb{R}^4}\frac{1-|x|^2}{(\lambda^2+|x|^2)^4(1+|x|^2)}dx\cdot x_0\\
			=&-12C_0\lambda^4\int_0^\infty\frac{(1-t^2)t^3}{(\lambda^2+t^2)^4(1+t^2)}dt\cdot x_0.
		\end{split}
	\end{equation}
	Define
	\begin{align*}
		\alpha(\lambda)=12\lambda^4\int_0^\infty\frac{(1-t^2)t^3}{(\lambda^2+t^2)^4(1+t^2)}dt=6\int_0^\infty\frac{(1-\lambda^2s)s}{(1+s)^4(1+\lambda^2s)}ds.
	\end{align*}
	By dominated convergence and differentiation under the integral sign, we have
	\begin{align*}
		\lim_{\lambda\to0}\alpha(\lambda)=1,\qquad\alpha(1)=0,\qquad\alpha'(\lambda)=-24\lambda\int_0^\infty\frac{s^2}{(1+s)^4(1+\lambda^2s)^2}ds<0.
	\end{align*}
	In particular, $0<\alpha(\lambda)<1$ for $0<\lambda<1$. We have
	\begin{align}\label{int_S^4theta|F_b_lambda|^2dTheta and YM(b'_lambda;S^4)}
		\int_{S^4}x|F_{b'_\lambda}|^2d\Theta=-\alpha(\lambda)\int_{S^4}|F_{b'_\lambda}|^2d\Theta\cdot x_0.
	\end{align}
	For any $p\in S^4$, take $P\in SO(5)$ such that $Px_0=-p$. Then $b_\lambda^p=(P^{-1})^*b'_\lambda$ is a Yang-Mills connection and satisfies
	\begin{align*}
		\int_{S^4}x|F_{b^p_\lambda}|^2d\Theta=\alpha(\lambda)\int_{S^4}|F_{b^p_\lambda}|^2d\Theta\cdot p=C_0\alpha(\lambda)\cdot p.
	\end{align*}
	\begin{lemma}\label{lem:F_b to0 in L^2}
		For any $p\in S^4$, we have
		\begin{align*}
			&|F_{b_\lambda^p}|^2d\Theta\rightharpoonup C_0\delta_p\textrm{ in the sense of distributions,}\\
			&F_{b'_\lambda}\rightharpoonup0\ \text{in }\ L^2(S^4)
		\end{align*}
		as $\lambda\to0$.
	\end{lemma}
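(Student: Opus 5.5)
The plan is to work entirely with the explicit density formula already computed above. For $b'_\lambda$ we have, in stereographic coordinates,
\begin{align*}
|F_{b'_\lambda}|^2 d\Theta=\frac{192\lambda^4}{(\lambda^2+|x|^2)^4}\,dx .
\end{align*}
The density of $F_{b^p_\lambda}$ is obtained from this by the isometry $P$. Since $|F_{b^p_\lambda}|^2=|F_{b'_\lambda}|^2\circ P^{-1}$, it suffices to prove the first claim for $b'_\lambda$ with the concentration point $\phi(0)=(0,\dots,0,1)=-x_0$. Then push forward by $P$, which maps $-x_0$ to $p$.

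For the first claim, fix $\psi\in C^\infty(S^4)$. We need to show
\begin{align*}
\int_{S^4}\psi\,|F_{b'_\lambda}|^2d\Theta\to C_0\psi(-x_0).
\end{align*}
The point $x_0$ is a single point, so it has measure zero, and we may write
\begin{align*}
\int_{S^4}\psi\,|F_{b'_\lambda}|^2d\Theta=\int_{\mathbb{R}^4}\psi(\phi(x))\frac{192\lambda^4}{(\lambda^2+|x|^2)^4}dx.
\end{align*}
Substitute $x=\lambda z$. The integral becomes
\begin{align*}
\int_{\mathbb{R}^4}\psi(\phi(\lambda z))\frac{192}{(1+|z|^2)^4}dz.
\end{align*}
The integrand is dominated by $192\|\psi\|_\infty(1+|z|^2)^{-4}$, which is integrable on $\mathbb{R}^4$. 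It converges pointwise to $\psi(\phi(0))\cdot192(1+|z|^2)^{-4}$. By dominated convergence and \eqref{YM(b'_lambda;S^4)}, the limit is $C_0\psi(-x_0)$. This gives the distributional convergence, in fact weak-$*$ convergence of measures.

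For the second claim, the family $F_{b'_\lambda}$ has $\|F_{b'_\lambda}\|_{L^2}^2=C_0$ for every $\lambda$. So it is bounded in $L^2$, and it suffices to test against a dense class. I take smooth $\mathfrak{so}(5)$-valued 2-forms $\omega$ that vanish in a neighborhood $U$ of $-x_0$; these are dense in $L^2$ because a point has zero capacity in $L^2$, so cutting off near a point costs arbitrarily little. For such $\omega$, Cauchy--Schwarz gives
\begin{align*}
\Big|\int\langle F_{b'_\lambda},\omega\rangle d\Theta\Big|\le\|\omega\|_{L^2}\Big(\int_{S^4\setminus U}|F_{b'_\lambda}|^2d\Theta\Big)^{1/2}.
\end{align*}
The last factor tends to $0$ by the first claim. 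Concretely, in the rescaled variable the region $S^4\setminus U$ corresponds to $|z|\ge c/\lambda$, and the integral of $(1+|z|^2)^{-4}$ over that region tends to $0$. A standard $\varepsilon/3$ argument then extends the convergence from the dense class to all of $L^2$, giving $F_{b'_\lambda}\rightharpoonup0$.

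The only point needing care is that the curvature is a bundle-valued form rather than a scalar, so weak convergence must be tested against $L^2$ sections of $\Lambda^2T^*S^4\otimes\mathfrak{so}(5)$. In the trivialization this is just a finite collection of components, so no real obstacle arises. I expect the whole proof to be routine, with the energy concentration estimate being the main step.
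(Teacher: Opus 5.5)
Your proof is correct. The first claim is handled as in the paper: the paper passes to polar coordinates and substitutes $s=t^2/\lambda^2$, you rescale $x=\lambda z$ directly, and both finish with dominated convergence against $192(1+|z|^2)^{-4}$.

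For $F_{b'_\lambda}\rightharpoonup0$ you take a different route. You feed the concentration from the first claim back in: you test against smooth forms vanishing near $-x_0$ and apply Cauchy--Schwarz to the energy outside a neighbourhood of the bubble point. The paper does not use the first claim here. It computes the $L^1$ norm of the curvature explicitly,
\begin{align*}
\|F_{b'_\lambda}\|_{L^1(S^4)}\le C\lambda^2(1+|\ln\lambda|),
\end{align*}
tests against smooth forms through their $L^\infty$ norm, and then extends to all of $L^2$ by density using the uniform bound $\|F_{b'_\lambda}\|_{L^2}^2=C_0$.

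Your argument is softer and more general. It shows that any $L^2$-bounded family whose energy density concentrates at a point converges weakly to zero, with no computation beyond the energy density. The paper's estimate instead gives a quantitative, rotation-invariant rate. That rate is reused verbatim in the proof of Theorem \ref{thm:main} for $b^{p_k^\pm}_{\lambda_k}$, whose concentration points $p_k^\pm$ move with $k$. With your method that step needs a small extra argument, for instance that $p_k^\pm$ converge and the rescaling still concentrates the energy at the limit points.

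A minor remark: density in $L^2$ of forms vanishing near $-x_0$ only needs that a point has measure zero. Capacity is the relevant notion for $W^{1,2}$, not for $L^2$.
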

	\begin{proof}
		Due to the symmetry of $S^4$, it suffices to prove the first assertion for $b'_\lambda$, whose concentration point is $-x_0$. For any $h\in C^\infty(S^4)$, substituting $s=\frac{t^2}{\lambda^2}$, we have
		\begin{align*}
			&\int_{S^4}h(x)|F_{b'_\lambda}|^2d\Theta=192\lambda^4\int_{\mathbb{R}^4}\frac{h(\phi(x))}{(\lambda^2+|x|^2)^4}dx=192\lambda^4\int_0^\infty\frac{t^3}{(\lambda^2+t^2)^4}\int_{S^3}h(\phi(t\theta))d\Theta_{S^3}dt\\
			=&96\int_0^\infty\frac{s}{(s+1)^4}\int_{S^3}h(\phi(\lambda\sqrt s\theta))dV_{g_{S^3}}ds.
		\end{align*}
		Since $\phi(\lambda\sqrt{s}\theta)\to-x_0$ for each fixed $s$ and $\theta$, dominated convergence gives
		\begin{align*}
			\lim_{\lambda\to0}\int_{S^4}h(\theta)|F_{b'_\lambda}|^2d\Theta=96|S^3|h(-x_0)\int_0^\infty\frac{s}{(s+1)^4}ds=C_0h(-x_0).
		\end{align*}
		Thus $|F_{b'_\lambda}|^2d\Theta\rightharpoonup C_0\delta_{-x_0}$. Since $P(-x_0)=p$, it follows that $|F_{b_\lambda^p}|^2d\Theta\rightharpoonup C_0\delta_p$.
		
		For any $\psi\in\Omega^2(\mathfrak{g}_E)$, we have
		\begin{align*}
			|\int_{S^4}\langle F_{b'_\lambda},\psi\rangle dV|&\le\|\psi\|_{L^\infty(S^4)}\int_{S^4}|F_{b'_\lambda}|dV\\
			&=\|\psi\|_{L^\infty(S^4)}\int_{\mathbb{R}^4}\frac{32\sqrt3\lambda^2}{(1+|x|^2)^2(\lambda^2+|x|^2)^2}dx\\
			&=16\sqrt3\|\psi\|_{L^\infty(S^4)}|S^3|\lambda^2\int_0^\infty\frac{s}{(1+\lambda^2s)^2(1+s)^2}ds,
		\end{align*}
		where
		\begin{align*}
			\int_0^\infty\frac{s}{(1+\lambda^2s)^2(1+s)^2}ds\le\int_0^1sds+\int_1^{\lambda^{-2}}\frac1sds+\int_{\lambda^{-2}}^\infty\lambda^{-4}\frac1{s^3}ds\le C(|\ln\lambda|+1).
		\end{align*}
		Let $\psi$ be a $L^2$-section in $\Lambda^2T^*S^4\otimes\mathfrak{g}_E$. Choose $\psi_k\in\Omega^2(\mathfrak{g}_E)$ such that $\psi_k\to\psi$ in $L^2$ norm. Then we have
		\begin{align*}
			&|\int_{S^4}\langle F_{b'_\lambda},\psi\rangle dV|\\
			\le&|\int_{S^4}\langle F_{b'_\lambda},\psi-\psi_k\rangle dV|+|\int_{S^4}\langle F_{b'_\lambda},\psi_k\rangle dV|\\
			\le&\|F_{b'_\lambda}\|_{L^2(S^4)}\|\psi-\psi_k\|_{L^2(S^4)}+C\|\psi_k\|_{L^\infty(S^4)}\lambda^2(|\ln\lambda|+1).
		\end{align*}
		Using $\|F_{b'_\lambda}\|_{L^2(S^4)}^2=C_0$, we have
		\begin{align*}
			\limsup_{\lambda\to0}|\int_{S^4}\langle F_{b'_\lambda},\psi\rangle dV|\le\limsup_{k\to\infty}\lim_{\lambda\to0}\sqrt{C_0}\|\psi-\psi_k\|_{L^2(S^4)}+C\|\psi_k\|_{L^\infty(S^4)}\lambda^2(|\ln\lambda|+1)=0.
		\end{align*}
		Therefore, we obtain $F_{b'_\lambda}\rightharpoonup0$.
	\end{proof}
	Let
	\begin{align}
		B=\phi^*\hat a
	\end{align}
	be a connection on $\phi^*E$ over $\mathbb{R}^4$. Since
	\begin{align*}
		|\phi(x)-x_0|=O(\frac1{|x|}),\quad|\phi_*(\frac\partial{\partial x^i})|_{g_{S^4}}=O(\frac1{|x|^2})
	\end{align*}
	as $x\to\infty$ and $|\hat a(e_i)|(p)=O(|p-x_0|)$ for any unit vector $e_i\in T_pS^4$, we have
	\begin{align*}
		|B(\frac{\partial}{\partial x^i})|(x)=|\hat a(\phi_*(\frac\partial{\partial x^i}))|(\phi(x))=O(\frac1{|x|^3})\textrm{ as }x\to\infty.
	\end{align*}
	Similarly, since $|\nabla\hat a(e_i)|(p)=O(1)$, we have
	\begin{align*}
		|\nabla B(\frac\partial{\partial x^i},\frac\partial{\partial x^j})|(x)=O(\frac1{|x|^4})\textrm{ as }x\to\infty.
	\end{align*}
	Therefore,
	\begin{align*}
		|B|(x)=O(\frac1{|x|^3}),\quad|\nabla B|(x)=O(\frac1{|x|^4})\textrm{ as }x\to\infty.
	\end{align*}
	Since $B$ is smooth on $\mathbb{R}^4$, these decay estimates imply that $B\in L^2(\mathbb{R}^4)\cap L^4(\mathbb{R}^4)$ and $\nabla B\in L^2(\mathbb{R}^4)$. Define $\rho_r:\mathbb{R}^4\to\mathbb{R}^4$ by $\rho_r(x)=rx$ for $r>0$. Then $f_\lambda\circ\phi=\phi\circ\rho_{\frac1\lambda}$. Let
	\begin{align}
		B_\lambda=\phi^*b'_\lambda.
	\end{align}
	Then we have
	\begin{align*}
		B_\lambda=\phi^*b'_\lambda=\phi^*f_\lambda^*\hat a=\rho_{\frac1\lambda}^*\phi^*\hat a=\rho_{\frac1\lambda}^*B
	\end{align*}
	and thus
	\begin{align*}
		B_\lambda(x)=\frac1\lambda B(\frac x\lambda),\quad \nabla B_\lambda(x)=\frac1{\lambda^2}\nabla B(\frac x\lambda).
	\end{align*}
	Therefore,
	\begin{align*}
		\|B_\lambda\|_{L^2(\mathbb{R}^4)}=\lambda\|B\|_{L^2(\mathbb{R}^4)},\quad\|B_\lambda\|_{L^4(\mathbb{R}^4)}=\|B\|_{L^4(\mathbb{R}^4)},\quad\|\nabla B_\lambda\|_{L^2(\mathbb{R}^4)}=\|\nabla B\|_{L^2(\mathbb{R}^4)}.
	\end{align*}
	\begin{lemma}\label{lem:b'_lambda convergence in L^2 and W^1,2}
		We have
		\begin{align*}
			b'_\lambda\to0,\ in\ L^2(S^4),\quad b'_\lambda\rightharpoonup0,\ \text{in }\ W^{1,2}(S^4)
		\end{align*}
		as $\lambda\to0$.
	\end{lemma}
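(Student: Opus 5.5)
We reduce the claim to the flat connection forms $B_\lambda=\phi^*b'_\lambda=\rho_{1/\lambda}^*B$ on $\mathbb{R}^4$, whose scaling was computed just above. The plan is to show $\|b'_\lambda\|_{L^2(S^4)}\to0$ and $\sup_{\lambda\le1}\|b'_\lambda\|_{W^{1,2}(S^4)}<\infty$. Weak convergence to $0$ in $W^{1,2}(S^4)$ then follows. Indeed, every subsequence has a further subsequence converging weakly in $W^{1,2}$. By the strong $L^2$ convergence, any such weak limit must be $0$. Hence the whole family converges weakly to $0$.

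For the $L^2$ norm, pull back by the conformal map $\phi$. The metric is $\phi^*g_{S^4}=\frac{4}{(1+|x|^2)^2}\delta$, so for a $1$-form $|b|^2_{g_{S^4}}=\frac{(1+|x|^2)^2}{4}|\phi^*b|^2_{\delta}$ and $d\Theta=\frac{16}{(1+|x|^2)^4}dx$. This gives
\begin{align*}
\int_{S^4}|b'_\lambda|^2d\Theta=\int_{\mathbb{R}^4}\frac{4}{(1+|x|^2)^2}|B_\lambda|^2dx\le 4\|B_\lambda\|_{L^2(\mathbb{R}^4)}^2=4\lambda^2\|B\|^2_{L^2(\mathbb{R}^4)}\to0.
\end{align*}
This uses $B\in L^2(\mathbb{R}^4)$, which follows from the decay $|B|=O(|x|^{-3})$. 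Note that $x_0$ has measure zero, so the integral over $\mathbb{R}^4$ captures all of $S^4$.

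For the gradient, express the Levi-Civita covariant derivative of $b'_\lambda$ on $S^4$ in stereographic coordinates. The Christoffel symbols of the conformal metric $e^{2u}\delta$ with $u=\log\frac{2}{1+|x|^2}$ are of the form $\partial u*\,\cdot$, and $|du|\le\frac{2|x|}{1+|x|^2}$. Hence, schematically, $\nabla^{S^4}b=\nabla^{\delta}\phi^*b+du*\phi^*b$. Measuring the $2$-tensor in $g_{S^4}$ costs a factor $\frac{(1+|x|^2)^4}{16}$, which exactly cancels the volume factor. Therefore
\begin{align*}
\int_{S^4}|\nabla b'_\lambda|^2d\Theta\le C\int_{\mathbb{R}^4}|\nabla B_\lambda|^2+|du|^2|B_\lambda|^2\,dx\le C\|\nabla B\|_{L^2}^2+C\int_{\mathbb{R}^4}\frac{|x|^2}{(1+|x|^2)^2}|B_\lambda|^2dx .
\end{align*}
The first term is bounded uniformly in $\lambda$ by the scaling identity $\|\nabla B_\lambda\|_{L^2}=\|\nabla B\|_{L^2}$.

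The main obstacle is the lower-order term $\int |du|^2|B_\lambda|^2$. Here one must not use $\|B_\lambda\|_{L^2}$ crudely, since $|du|$ is bounded but only the scaled bound is available. The simplest route is $|du|^2\le 1$, which bounds the term by $\|B_\lambda\|_{L^2}^2=\lambda^2\|B\|_{L^2}^2$. This is fine and even tends to $0$. Alternatively, apply Hölder with $\|B_\lambda\|_{L^4}=\|B\|_{L^4}$ and $|du|\in L^4(\mathbb{R}^4)$; either choice gives a uniform bound.

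It remains to check that no boundary contribution arises near $x_0$, which corresponds to $|x|\to\infty$. Since $b'_\lambda$ is smooth on all of $S^4$ and the point $x_0$ is negligible, this is automatic. Combining the two estimates yields the uniform $W^{1,2}$ bound, and the lemma follows.
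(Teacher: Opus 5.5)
Your proposal is correct and follows the paper's proof. The conformal change of variables gives $\|b'_\lambda\|_{L^2(S^4)}\le 2\lambda\|B\|_{L^2(\mathbb{R}^4)}$, and the Christoffel-symbol estimate gives a uniform bound on $\|\nabla b'_\lambda\|_{L^2(S^4)}$; your weak-compactness/uniqueness-of-limit step is an equivalent substitute for the paper's integration by parts against smooth approximants $h_k$. One slip, in an aside only: the alternative H\"older route fails, since $|du|=\frac{2|x|}{1+|x|^2}\sim 2/|x|$ is not in $L^4(\mathbb{R}^4)$ (its fourth power diverges logarithmically at infinity), but your primary bound $|du|\le 1$ already suffices.
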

	\begin{proof}
		For any $p=2,4$, we have
		\begin{align*}
			&\int_{S^4}|b'_\lambda|^pd\Theta=\int_{S^4}(\sum_{i=1}^4|b_\lambda'(\frac{1+|x|^2}2\phi_*\frac\partial{\partial x^i})|^2)^{\frac p2}d\Theta=\int_{\mathbb{R}^4}(\sum_{i=1}^4(\frac{1+|x|^2}2)^{p-4}|b'_\lambda(\phi_*\frac\partial{\partial x^i})|^2)^{\frac p2}(\phi(x))dx\\
			=&\int_{\mathbb{R}^4}(\frac{1+|x|^2}2)^{p-4}|B_\lambda|^pdx
		\end{align*}
		and thus
		\begin{align}\label{|b_lambda|_L^2}
			\|b'_\lambda\|_{L^2(S^4)}\le2\lambda\|B\|_{L^2(\mathbb{R}^4)},\quad\|b'_\lambda\|_{L^4(S^4)}\le\|B\|_{L^4(\mathbb{R}^4)}.
		\end{align}
		Therefore, 
		\begin{align*}
			b'_\lambda\to0,\ \text{in }L^2(S^4).
		\end{align*}
		In stereographic coordinates, the spherical metric is $\phi^*g_{S^4}=4(1+|x|^2)^{-2}\sum_{i=1}^4(dx^i)^2$. Its Levi-Civita connection satisfies 
		\begin{align*}
			D_{\phi_*\frac\partial{\partial x^i}}(\phi_*\frac\partial{\partial x^j})=-\frac{2}{1+|x|^2}(x^i\phi_*\frac\partial{\partial x^j}+x^j\phi_*\frac\partial{\partial x^i}-\delta_{ij}\sum_{k=1}^4x^k\phi_*\frac\partial{\partial x^k}).
		\end{align*}
		Since $b'_\lambda(\phi_*X)=B_\lambda(X)\circ\phi^{-1}$, we have
		\begin{align*}
			|\nabla_{\phi_*\frac\partial{\partial x^i}}b'_\lambda(\phi_*\frac\partial{\partial x^j})|=|\frac\partial{\partial x^i}(B_\lambda(\frac\partial{\partial x^j}))\circ\phi^{-1}-b'_\lambda(D_{\phi_*\frac\partial{\partial x^i}}\phi_*\frac\partial{\partial x^j})|\le C(|\nabla B_\lambda|+\frac{|x|}{1+|x|^2}|B_\lambda|)
		\end{align*}
		and thus
		\begin{align*}
			&\int_{S^4}|\nabla b'_\lambda|^2d\Theta\\
			=&\sum_{i,j=1}^4\int_{S^4}(\frac{1+|x|^2}2)^4|\nabla_{\phi_*\frac\partial{\partial x^i}}b'_\lambda(\phi_*\frac\partial{\partial x^j})|^2d\Theta\\
			=&\sum_{i,j=1}^4\int_{\mathbb{R}^4}|\nabla_{\phi_*\frac\partial{\partial x^i}}b'_\lambda(\phi_*\frac\partial{\partial x^j})|^2(\phi(x))dx\\
			\le&C(\int_{\mathbb{R}^4}|\nabla B_\lambda|^2+\frac{|x|^2}{(1+|x|^2)^2}|B_\lambda|^2dx).
		\end{align*}
		Therefore, we have
		\begin{align*}
			\|\nabla b'_\lambda\|_{L^2(S^4)}\le C(\|\nabla B\|_{L^2(\mathbb{R}^4)}+\lambda\|B\|_{L^2(\mathbb{R}^4)}).
		\end{align*}
		For any $h\in L^2(T^*S^4\otimes T^*S^4\otimes\mathfrak{g}_E)$, take smooth sections $h_k$ with $h_k\to h$ in $L^2$. Integration by parts gives
		\begin{align*}
			|\int_{S^4}\langle\nabla b'_\lambda,h\rangle d\Theta|&\le C(\|b'_\lambda\|_{L^2}\|\nabla h_k\|_{L^2}+\|\nabla b'_\lambda\|_{L^2}\|h-h_k\|_{L^2})\\
			&\le C(\|b'_\lambda\|_{L^2}\|\nabla h_k\|_{L^2}+\|h-h_k\|_{L^2})
 		\end{align*}
		for $0<\lambda\le1$. For each fixed $k$, letting $\lambda\to0$ gives
		\begin{align*}
			\limsup_{\lambda\to0}|\int_{S^4}\langle\nabla b'_\lambda,h\rangle d\Theta|\le C\|h-h_k\|_{L^2}.
		\end{align*}
		Letting $k\to\infty$, we obtain $\nabla b'_\lambda\rightharpoonup0$ in $L^2(S^4)$.
	\end{proof}
	Applying \eqref{pi^*b L^p}, \eqref{|db|_{L^2}} and Lemma \ref{lem:b'_lambda convergence in L^2 and W^1,2}, we have
	\begin{align*}
		\pi^*b'_\lambda\to0,\ \text{in }L^2(B^5),\quad \pi^*b'_\lambda\rightharpoonup0,\ in\ W^{1,2}(B^5).
	\end{align*}
	For any $\psi\in \Omega^2(\mathfrak{g}_E)$, we have $\|\pi^*\psi\|_{L^2(B^5)}=\|\psi\|_{L^2(S^4)}$. Since $F_{\pi^*b'_\lambda}=\pi^*F_{b'_\lambda}$, Lemma \ref{lem:F_b to0 in L^2} gives
	\begin{align*}
		F_{\pi^*b'_\lambda}\rightharpoonup0\ \text{in }L^2(B^5).
	\end{align*}
	\begin{proof}[Proof of Theorem \ref{thm:main}]
	Now let $E'=S^4\times(\mathbb{R}^5\oplus\mathbb{R}^5\oplus\mathbb{R}^5)$ be the trivial vector bundle over $S^4$ with structure group $SO(15)$. Set $\alpha_0=\alpha(\frac12)$, $\lambda_k=\frac1{k+2}$ for $k\ge1$, and $e_1=(1,0,0,0,0)$. Define
	\begin{align*}
		p_k^\pm=\pm\sqrt{1-\frac{\alpha_0^2}{4\alpha(\lambda_k)^2}}e_1+\frac{\alpha_0}{2\alpha(\lambda_k)}x_0.
	\end{align*}
	Since $0<\alpha_0\le\alpha(\lambda_k)$, these points are well-defined and satisfy
	\begin{align*}
		p_k^\pm\in S^4,\qquad p_k^++p_k^-=\frac{\alpha_0}{\alpha(\lambda_k)}x_0.
	\end{align*}
	Define
	\begin{align}
		b_k=b'_{\frac12}\oplus b_{\lambda_k}^{p_k^+}\oplus b_{\lambda_k}^{p_k^-},\qquad b_\infty=b'_{\frac12}\oplus0\oplus0.
	\end{align}
	Then $b_k$ and $b_\infty$ are Yang-Mills connections on $E'$. Since the squared curvature norm is additive under direct sums, \eqref{YM(b'_lambda;S^4)} and \eqref{int_S^4theta|F_b_lambda|^2dTheta and YM(b'_lambda;S^4)} give
	\begin{align*}
		\int_{S^4}x|F_{b_k}|^2d\Theta&=C_0\left(-\alpha_0x_0+\alpha(\lambda_k)(p_k^++p_k^-)\right)=0,\\
		\int_{S^4}x|F_{b_\infty}|^2d\Theta&=-C_0\alpha_0x_0\ne0.
	\end{align*}
	By Proposition \ref{lem:stationary}, each $\pi^*b_k$ is a stationary weak Yang-Mills connection over $B^5$, whereas $\pi^*b_\infty$ is not stationary.
	
	Now we prove the convergence. Since the $L^2$ norms of a form and its covariant derivative are invariant under rotations, we have
	\begin{align*}
		&\|b_{\lambda_k}^{p_k^\pm}\|_{L^2(S^4)}=\|b'_{\lambda_k}\|_{L^2(S^4)}\le C\lambda_k\to0\textrm{ as }k\to\infty,\\
		&\sup_k\|\nabla b'_{\lambda_k}\|_{L^2}=\sup_k\|\nabla b_{\lambda_k}^{p_k^\pm}\|_{L^2}<+\infty,
	\end{align*} 
	Using the same argument as in Lemma \ref{lem:b'_lambda convergence in L^2 and W^1,2}, we have
	\begin{align*}
		b_k\to b_\infty\ \text{in }L^2(S^4),\quad b_k\rightharpoonup b_\infty\ in\ W^{1,2}(S^4).
	\end{align*}
	The $L^1$ estimate in the proof of Lemma \ref{lem:F_b to0 in L^2} and rotation invariance give
	\begin{align*}
		\|F_{b_{\lambda_k}^{p_k^\pm}}\|_{L^1(S^4)}=\|F_{b'_{\lambda_k}}\|_{L^1(S^4)}\le C\lambda_k^2(1+|\ln\lambda_k|)\to0.
	\end{align*}
	Together with the uniform $L^2$ bound on the curvatures, this implies $F_{b_{\lambda_k}^{p_k^\pm}}\rightharpoonup0$ in $L^2(S^4)$. Hence
	\begin{align}
		\pi^*b_k\to\pi^*b_\infty\ in\ L^2(B^5),\quad\pi^*b_k\rightharpoonup\pi^*b_\infty\ in\ W^{1,2}(B^5),\quad F_{\pi^*b_k}\rightharpoonup F_{\pi^*b_\infty}\ \text{in }L^2(B^5).
	\end{align}
	The $L^4$ estimate in \eqref{|b_lambda|_L^2}, rotation invariance, and \eqref{pi^*b L^p} also give $\sup_k\|\pi^*b_k\|_{L^4(B^5)}<\infty$. All these connections are smooth away from the origin. This completes the proof.
	\end{proof}
	
	\appendix
	\section*{Use of AI}
	The construction in this paper was inspired by the work of Ding, Li and Li \cite{DLL}. The connections $a$ and $\hat a$, as well as the energy-concentrating family $b'_\lambda$ and its rotated versions $b_\lambda^p$, are independently constructed by the authors. The subsequent direct-sum construction used in the proof of Theorem \ref{thm:main} to cancel the first moment of the curvature energy density was generated by AI. The authors take full responsibility for the mathematical content and conclusions of this paper.
	\section*{Acknowledge}
	The first author is supported by National Key R\&D Program of China 2022YFA1005400. The
	second author is supported by NSFC No. 12531002, 12431004, 11721101. 
	
\end{document}